\newtheorem{theorem}{Theorem}[section]
\newtheorem{definition}{Definition}[section]
\newtheorem{lemma}{Lemma}[section]
\newtheorem{proposition}{Proposition}[section]
\newtheorem{remark}{Remark}[section]
\newtheorem{corollary}{Corollary}[section]
\newenvironment{proof}[1][Proof]{\noindent\textbf{#1.} }{\ \rule{0.5em}{0.5em}}
\def\theequation{\arabic{section}.\arabic{equation}}\makeatother
\begin{document}
\date{}
\title{
{\bf\Large  Torsion functions and the Cheeger problem: a fractional approach}}
\author{{\bf\large H. Bueno, G. Ercole, S.S. Macedo  and
G.A. Pereira}\footnote{The authors acknowledge the support of CNPq-Brazil and FAPEMIG.}\hspace{2mm}
\vspace{1mm}\\
{\it\small Departamento de Matem\'{a}tica}\\ {\it\small Universidade Federal de Minas Gerais},
{\it\small Belo Horizonte, 31.270-901, Brazil}\\
{\it\small e-mail: hamilton@mat.ufmg.br, grey@mat.ufmg.br, sdasilvamacedo@gmail.com}\\ 
{\it\small and gilbertoapereira@yahoo.com.br}}

\maketitle
\noindent
{\bf{Abstract}}: {{\small Let $\Omega$ be a Lipschitz bounded domain of $\mathbb{R}^N $, $N\geq2$. The fractional Cheeger constant $h_s (\Omega)$, $0<s<1$, is defined by $h_s(\Omega)=\inf_{E\subset{\Omega}}\frac{P_s(E)}{|E|}$, where $P_s (E)=\int_{\mathbb{R}^N }\int_{\mathbb{R}^N }\frac{|\chi_{E}(x)-\chi_{E}(y)|}{|x-y|^{N+s}}\textup{d}x\textup{d}y$ with $\chi_{E}$ denoting the characteristic function of the smooth subdomain $E$. The main purpose of this paper is to show that $\lim_{p\rightarrow1^+}\left|\phi_p^s\right|_{L^{\infty}(\Omega)}^{1-p}=h_s (\Omega)=\lim_{p\rightarrow1^+}\left|\phi_p^s\right|_{L^1(\Omega)}^{1-p}$, where $\phi_p^s$ is the fractional $(s,p)$-torsion function of $\Omega$, that is, the solution of the Dirichlet problem for the fractional $p$-Laplacian: $-(\Delta)_p^s\,u=1$ in $\Omega$, $u=0$ in $\mathbb{R}^N \setminus\Omega$. For this, we derive suitable bounds for the first eigenvalue $\lambda_{1,p}^s(\Omega)$ of the fractional $p$-Laplacian operator in terms of $\phi_p^s$.  We also show that $\phi_p^s$ minimizes the $(s,p)$-Gagliardo seminorm in $\mathbb{R}^N $, among the functions normalized by the $L^1$-norm.}}
\vspace{3mm}

\noindent
	{\bf{Key words}}: {\small Fractional Cheeger problem, fractional $p$-Laplacian, fractional torsion functions.}

\noindent
{\bf {MSC 2010}}: {\small Primary 35P15, 35R11; Secondary 47A75}.\\

\section{\bf Introduction}
\def\theequation{1.\arabic{equation}}\makeatother
\setcounter{equation}{0}

The Cheeger constant $h(\Omega)$ of a bounded domain $\Omega\subset
\mathbb{R}^N $ ($N>1$) is defined by
\begin{equation}
h(\Omega)=\inf_{E\subset\Omega}\frac{P(E)}{|E|}, \label{Cheeger}%
\end{equation}
where $E$ is a smooth subset of $\Omega$ and the nonnegative values $P(E)$ and
$|E|$ denote, respectively, the distributional perimeter and the
$N$-dimensional Lebesgue measure of $E$. A subset $E$ that minimizes the
quotient is a Cheeger set of $\Omega$.

In \cite{Kawohl1} Kawohl and Fridman proved that
\[
h(\Omega)=\lim_{p\rightarrow1^+ }{\lambda_{1,p}}(\Omega),
\]
where $\lambda_{1,p}(\Omega)$ is the first eigenvalue of the Dirichlet
$p$-Laplacian operator, that is, the least real number $\lambda$ such that the
Dirichlet problem%
\[
\left\{
\begin{array}
[c]{rcll}%
-\Delta_p    u&=&\lambda\left\vert u\right\vert ^{p-2}u & \mathrm{in }\  \Omega\\
u&=&0 & \mathrm{on }\  \partial\Omega,
\end{array}
\right.
\]
has a nontrivial solution. (Let us recall that the $p$-Laplacian operator is
defined by $\Delta_p    u:=\textup{div}\,(|\nabla u|^{p-2}\nabla u)$, $p>1$. )

The first eigenvalue ${\lambda_{1,p}}(\Omega)$ is also variationally characterized by
\[\lambda_{1,p}(\Omega):=\min\left\{\left|\nabla u\right|_p^p\,:\, u\in W_0 ^{1,p}(\Omega),\left|u\right|_p=1\right\}  ,
\]
with $\left|\cdot\right|_r$ standing for the usual norm of $L^r(\Omega)$, $1\leq r\leq\infty$ (this notation will be adopted from now on).

In \cite{Bueno} a different characterization of the Cheeger constant of $\Omega$ was obtained:
\[\lim_{p\rightarrow1^+ }\frac{1}{|\phi_p|_\infty^{p-1}}
=h(\Omega)=\lim_{p\rightarrow1^+ }\frac{1}{|\phi_p|_1^{p-1}}
\]
where $\phi_p$ denotes the $p$-torsion function of $\Omega$, that is, the solution of the Dirichlet problem
\[\left\{\begin{array}[c]{rcll}
-\Delta_p u&=&1 & \mathrm{in }\ \Omega\\
u&=&0 & \mathrm{on }\ \partial\Omega,
\end{array}\right.
\]
which is known as the $p$-torsional creep problem (see \cite{Creep}).

We remark that
\[\frac{1}{|\phi_p|_1^{p-1}}=\min\left\{\left|\nabla
u\right|_p^p\,:\, u\in W_0^{1,p}(\mathbb{R}^N),\ \left|u\right|_1=1\right\}  ,
\]
since the minimum is attained at $\phi_p/\left|\phi_p\right|_1$.

The fractional version of problem (\ref{Cheeger}) consists in minimizing that quotient when $P(E)$ is substituted by $P_s(E)$, given by
\[P_s(E)=\int_{\mathbb{R}^N}\int_{\mathbb{R}^N}\frac{|\chi_E(x)-\chi_E(y)|}{|x-y|^{N+s}}\mathrm{d}x\mathrm{d}y,
\]
where $\chi_E$ stands for the characteristic function of the smooth subdomain $E$. The value $P_s(E)$ is called the (nonlocal) $s$-perimeter of $E$. So, the fractional Cheeger problem is the minimization problem
\[h_s(\Omega)=\inf_{E\subset{\Omega}}\frac{P_s(E)}{|E|},
\]
and $h_s(\Omega)$ is called the $s$-Cheeger constant of $\Omega$.

For $1<p<\infty$ and $s\in(0,1)$, the fractional $(s,p)$-Laplacian $(-\Delta)_p^s$ is the nonlinear nonlocal operator defined by
\[(-\Delta)_p^s\,u(x)=\lim_{\epsilon\rightarrow0^+ }\int_{\mathbb{R}^N\setminus B_\epsilon(x)}\frac{|u(x)-u(y)|^{p-2}(u(x)-u(y))}{|x-y|^{N+sp}}\mathrm{d}y.
\]
This definition is consistent, up to a normalization constant depending on
$p$, $s$ and $N$, with the usual $p$-Laplacian operator.

The first (fractional) eigenvalue $\lambda_{1,p}^s(\Omega)$ of
$(-\Delta)_p^s$ is the least number $\lambda$ such that the problem
\[\left\{\begin{array}[c]{rcll}%
(-\Delta)_p    ^s  \,u&=&\lambda\left|u\right|^{p-2}u & \mathrm{in }\ \Omega\\
u&=&0 & \mathrm{on }\  \mathbb{R}^N\setminus\Omega,
\end{array}\right.
\]
has a nontrivial weak solution (see \cite{Erilind,FP}). Its variational characterization is given by (see \cite{BLP}):
\begin{equation}
\lambda_{1,p}^s(\Omega):=\min\left\{\left[u\right]  _{s,p}^p\,:\,u\in W_0^{s,p}(\Omega),\ \left|u\right|_p=1\right\},\label{minLp}%
\end{equation}
where
\begin{equation}
\left[u\right]_{s,p}:=\left(  \int_{\mathbb{R}^N}\int_{\mathbb{R}^N}\frac{|u(x)-u(y)|^p}{|x-y|^{N+sp}}\mathrm{d}x\mathrm{d}y\right)^{\frac
{1}{p}}\label{Gag}%
\end{equation}
is the $(s,p)$-seminorm of Gagliardo in $\mathbb{R}^N $ of a measurable function $u$ and $W_0^{s,p}(\Omega)$ is a suitable fractional Sobolev space defined in the sequel (see Definition \ref{wtil}).

In \cite{BLP} Brasco, Lindgren and Parini proved the $s$-Cheeger version of the result originally obtained by Kawohl and Friedman \cite{Kawohl1} for the Cheeger problem:
\begin{equation}
h_s(\Omega)=\lim_{p\rightarrow1^+ }{\lambda_{1,p}^s}(\Omega).\label{skaw}%
\end{equation}

In this paper, by assuming that $\Omega$ is a Lipschitz bounded domain, we show, in the spirit of the paper \cite{Bueno}, that the fractional version of the torsional creep problem,%
\begin{equation*}
\left\{\begin{array}[c]{rcll}%
(-\Delta)_p    ^s  \,u&=&1 & \mathrm{in }\ \Omega\\
u&=&0 & \mathrm{on }\ \mathbb{R}^N\setminus\Omega,
\end{array}\right.  \label{stor}%
\end{equation*}
is intrinsically connected to both the $s$-Cheeger problem and the first eigenproblem for the fractional Dirichlet $p$-Laplacian, as $p$ goes to $1$.

This connection will be developed in Section \ref{sec2}, where we introduce the $(s,p)$-torsion function of $\Omega$, that is, the weak solution
$\phi_p^s$ of (\ref{stor}). We will derive the  estimates
\begin{equation}
\frac{1}{\left|\phi_p^s\right|_\infty^{p-1}}\leq\lambda
_{1,p}^s(\Omega)\leq\left(\frac{|\Omega|}{\left|\phi_p^s\right|_1}\right)^{p-1}\label{bounds}%
\end{equation}
and
\begin{equation}
\frac{\left|\phi_p^s\right|_\infty}{\left|\phi_p^s\right|_1}\leq\frac{1}{\left|B_1\right|}\left(
\frac{sp+N(p-1)}{sp}\right) ^{\frac{sp+N(p-1)}{sp}}\left(\frac
{\lambda_{1,p}^s(\Omega)}{\lambda_{1,p}^s(B_1)}\right) ^{\frac{N}{sp}},\label{estim}%
\end{equation}
where $B_1$ denotes the unit ball of $\mathbb{R}^N$.

Then, taking \eqref{skaw} into account, we will combine \eqref{bounds} with \eqref{estim} in order to conclude the main result of this paper:
\begin{equation*}
\lim_{p\rightarrow1^+ }\frac{1}{\left|\phi_p^s\right|_\infty^{p-1}}=h_s(\Omega)=\lim_{p\rightarrow1^+}\frac{1}{\left|\phi_p^s\right|_1^{p-1}}.\label{sptorsionsCheeger}
\end{equation*}

Still in Section \ref{sec2} we prove that $\phi_p^s$ minimizes, in $W_0^{s,p}(\Omega)\setminus\left\{  0\right\}$, the Rayleigh quotient
$\left[u\right]_{s,p}^p/\left|u\right|_1^p$. As an
immediate consequence of this fact, we show that $\phi_p^s$ is a radial function when $\Omega$ is a ball.

\section{\bf The main results}\label{sec2}
\def\theequation{2.\arabic{equation}}
\makeatother
\setcounter{equation}{0}

From now on $\Omega$ denotes a Lipschitz bounded domain of $\mathbb{R}^N$, $N\geq2$, and $0<s<1< p<\frac{N}{s}$. 

\begin{definition}\label{wtil} The Sobolev space $W_0^{s,p}(\Omega)$ is the closure of $C_0^{\infty}(\Omega)$ with respect to the norm
\begin{equation}
\left\|u\right\|:=\left[u\right]_{s,p}+\left|u\right|
_p,\label{norma2}%
\end{equation}
where $\left[u\right]_{s,p}$ is defined by \eqref{Gag}.
\end{definition}

Functions in $W_0^{s,p}(\Omega)$ have a natural extension to $\mathbb{R}^N$ and, although $u=0$ in $\mathbb{R}^N\setminus\Omega$, the identity
\[\left[u\right] _{s,p}^p=\int_\Omega \int_\Omega\frac{|u(x)-u(y)|^p}{|x-y|^{N+sp}}\mathrm{d}x\mathrm{d}y+2\int_{\mathbb{R}^N\setminus\Omega}\int_\Omega\frac{|u(x)|^p}{|x-y|^{N+sp}}\mathrm{d}x\mathrm{d}y
\]
shows dependence on values in $\mathbb{R}^N\setminus\Omega$.

It is worth mentioning that $W_0 ^{s,p}(\Omega)$ is a reflexive Banach space and that this space coincides with the closure of $C_0^\infty(\Omega)$ relative to the norm
\[u\longmapsto\left(  \int_\Omega\int_\Omega\frac{|u(x)-u(y)|^p}{|x-y|^{N+sp}}\mathrm{d}x\mathrm{d}y\right) ^{\frac{1}{p}}+\left|u\right|_p,
\]
if $\partial\Omega$ is Lipschitz, (see \cite[Proposition B.1]{BLP}).

Moreover, thanks the fractional Poincar\'{e} inequality (see \cite[Lemma 2.4]{BLP})
\[\left|u\right|_p^p\leq C_{N,s,p,\Omega}\left[  u\right]_{s,p}^p,\quad\forall\,u\in C_0^\infty(\Omega),
\]
$\left[\cdot\right]_{s,p}$ is also a norm in $W_0^{s,p}(\Omega)$, equivalent to norm defined in \eqref{norma2}.

We refer the reader to \cite{Guide} for fractional Sobolev spaces.

\begin{definition}
We say that a function $u\in W_0^{s,p}(\Omega)$ is a weak solution of the fractional Dirichlet problem%
\begin{equation}
\left\{\begin{array}[c]{rcll}%
(-\Delta)_p^s\,u&=&f & \mathrm{in }\ \Omega\\
u&=&0 &\mathrm{on }\ \mathbb{R}^N\setminus\Omega,
\end{array}\right.  \label{fdiric}%
\end{equation}
if 
\begin{equation}
\left\langle(-\Delta)_p^s\,u,\varphi\right\rangle=
\int_\Omega
f\varphi\,\mathrm{d}x,\quad\forall\,\varphi\in W_0^{s,p}(\Omega),\label{fweak}%
\end{equation}
where
\[\left\langle (-\Delta)_p^s\,u,\varphi\right\rangle :=\int_{\mathbb{R}^N}\int_{\mathbb{R}^N}\frac{|u(x)-u(y)|^{p-2}(u(x)-u(y))(\varphi(x)-\varphi(y))}{|x-y|^{N+sp}}\mathrm{d}x\mathrm{d}y,
\]
a notation that will be used from now on.
\end{definition}

Existence of a weak solution of \eqref{fdiric}, when $f\in L^1(\Omega)$ follows from direct minimization in $W_0^{s,p}(\Omega)$ of the functional
\[\frac{1}{p}\left[u\right]_{s,p}^p-\int_\Omega f(x)u(x)\,\mathrm{d}x,
\]
whereas uniqueness comes from, for instance, the comparison principle for the fractional $p$-Laplacian (see \cite[Lemma 9]{Erilind}). The same principle shows that if $f$ is nonnegative then the weak solution $u$ is nonnegative as well. When $f\in L^\infty(\Omega)$ and $\Omega$ is sufficiently smooth, say with boundary at least of class $C^{1,1}$, the weak solutions are $\alpha$-H\"{o}lder continuous up to the boundary for some $\alpha\in(0,1)$, see \cite{IMS}. 

When $f\equiv1$ the Dirichlet problem \eqref{fdiric} will be referred to as the $(s,p)$-fractional torsional creep problem and its unique weak solution will be called $(s,p)$-torsion function. Let us denote this function by $\phi_p^s$. We have $\phi_p^s\geq0$ and
\begin{equation}
\left\langle(-\Delta)_p^s\,\phi_p^s,\varphi\right\rangle =\int_\Omega \varphi\,\mathrm{d}x,\quad\forall\,\varphi\in W_0^{s,p}(\Omega).\label{weaktor}%
\end{equation}
In particular, by taking $\varphi=\phi_p^s$ we obtain
\[\left\langle(-\Delta)_p^s\,\phi_p^s,\phi_p^s\right\rangle=\left[\phi_p^s\right]_{s,p}^p=\left|\phi_p^s\right|_1.
\]

\begin{theorem}
We have
\begin{equation}
\frac{1}{\left|\phi_p^s\right|_1^{p-1}}=\min\left\{  \left[v\right]_{s,p}^p\,:\,v\in W_0^{s,p}(\Omega),\, \left|v\right|_1=1\right\} =\left[ \frac{\phi_p^s}{\left|\phi_p^s\right|_1}\right] _{s,p}.\label{minL1}%
\end{equation}
Moreover, $\frac{\phi_p^s}{\left|\phi_p^s\right|_1}$ is the only nonnegative function attaining the minimum.
\end{theorem}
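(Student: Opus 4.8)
The plan is to establish \eqref{minL1} by a direct two-sided estimate — which simultaneously produces the minimizer explicitly — and then to read off uniqueness from the equality cases. Set
\[
\mu:=\inf\left\{[v]_{s,p}^p\,:\,v\in W_0^{s,p}(\Omega),\ |v|_1=1\right\}.
\]
The key tool is the Hölder-type inequality
\[
\left|\left\langle(-\Delta)_p^s\,u,w\right\rangle\right|\le[u]_{s,p}^{\,p-1}\,[w]_{s,p},\qquad u,w\in W_0^{s,p}(\Omega),
\]
which I would obtain by writing $|x-y|^{-(N+sp)}=|x-y|^{-(N+sp)\frac{p-1}{p}}\cdot|x-y|^{-(N+sp)\frac1p}$ and applying Hölder's inequality with exponents $\frac{p}{p-1}$ and $p$ to the double integral defining $\langle(-\Delta)_p^s\,u,w\rangle$. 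I would also record that, testing \eqref{weaktor} with $\varphi=\phi_p^s$ gives $[\phi_p^s]_{s,p}^p=|\phi_p^s|_1$; hence $v_\ast:=\phi_p^s/|\phi_p^s|_1$ belongs to $W_0^{s,p}(\Omega)$, is nonnegative, satisfies $|v_\ast|_1=1$ (recall $\phi_p^s\ge0$), and has $[v_\ast]_{s,p}^p=|\phi_p^s|_1^{\,1-p}$, so that $v_\ast$ is admissible and $\mu\le1/|\phi_p^s|_1^{p-1}$.

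For the opposite inequality, I would take any admissible $v$ and put $w:=|v|$, which again lies in $W_0^{s,p}(\Omega)$, is nonnegative, has $|w|_1=|v|_1=1$, and satisfies $[w]_{s,p}\le[v]_{s,p}$ because $\bigl||v(x)|-|v(y)|\bigr|\le|v(x)-v(y)|$. Testing \eqref{weaktor} with $\varphi=w$ and invoking the Hölder-type inequality,
\[
1=|v|_1=\int_\Omega w\,\mathrm{d}x=\left\langle(-\Delta)_p^s\,\phi_p^s,w\right\rangle\le[\phi_p^s]_{s,p}^{\,p-1}[w]_{s,p}=|\phi_p^s|_1^{\frac{p-1}{p}}[w]_{s,p}\le|\phi_p^s|_1^{\frac{p-1}{p}}[v]_{s,p},
\]
whence $[v]_{s,p}^p\ge|\phi_p^s|_1^{\,1-p}=1/|\phi_p^s|_1^{p-1}$. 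Thus $\mu\ge1/|\phi_p^s|_1^{p-1}$, and combining with the first paragraph we get $\mu=1/|\phi_p^s|_1^{p-1}$; the infimum is a minimum, attained at $v_\ast$, with common value $[v_\ast]_{s,p}^p=1/|\phi_p^s|_1^{p-1}$. This is exactly \eqref{minL1}.

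It remains to prove uniqueness. If $v\ge0$ with $|v|_1=1$ attains the minimum, then taking $w=v$ above makes all the inequalities equalities; in particular equality in the elementary bound $|a|^{p-2}ab\le|a|^{p-1}|b|$ and in Hölder's inequality forces, respectively,
\[
\bigl(\phi_p^s(x)-\phi_p^s(y)\bigr)\bigl(v(x)-v(y)\bigr)\ge0\quad\text{and}\quad\bigl|v(x)-v(y)\bigr|=\lambda\,\bigl|\phi_p^s(x)-\phi_p^s(y)\bigr|
\]
for almost every $(x,y)$ with respect to $|x-y|^{-(N+sp)}\,\mathrm{d}x\,\mathrm{d}y$, where $\lambda\ge0$ is a constant (in fact $\lambda>0$, else $v$ would be constant, hence $\equiv0$, contradicting $|v|_1=1$); together these give $v(x)-v(y)=\lambda\,(\phi_p^s(x)-\phi_p^s(y))$ a.e. Fixing $y$ in the positive-measure set $\mathbb{R}^N\setminus\Omega$, where $v(y)=\phi_p^s(y)=0$, yields $v=\lambda\phi_p^s$ a.e.; the normalization then forces $\lambda=1/|\phi_p^s|_1$, i.e.\ $v=\phi_p^s/|\phi_p^s|_1$. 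I expect the delicate point to be exactly this last step — extracting the pointwise identity $v=\lambda\phi_p^s$ from the equality cases, which rests on the characterization of equality in $L^p$–$L^{p/(p-1)}$ duality for the double integrals; the rest is a routine chain of estimates. (Alternatively, one could establish existence of a minimizer by the direct method — a minimizing sequence is bounded in the reflexive space $W_0^{s,p}(\Omega)$ by the fractional Poincaré inequality, $[\cdot]_{s,p}^p$ is weakly lower semicontinuous, and the compact embedding $W_0^{s,p}(\Omega)\hookrightarrow L^1(\Omega)$ preserves the constraint — and then identify the minimizer with $\phi_p^s/|\phi_p^s|_1$ via its Euler–Lagrange equation, the $(p-1)$-homogeneity of $(-\Delta)_p^s$, and the uniqueness of the torsion function.)
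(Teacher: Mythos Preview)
Your argument is correct, and it differs substantively from the paper's. The paper proceeds via the direct method: it first considers the auxiliary problem $m=\inf\{[v]_{s,p}^p:\int_\Omega v=1\}$, produces a minimizer by reflexivity and compact embedding, derives the Euler--Lagrange equation $(-\Delta)_p^s u=m$ via Lagrange multipliers, and then identifies $u=m^{1/(p-1)}\phi_p^s$ by uniqueness for the torsion problem; uniqueness of the nonnegative minimizer in \eqref{minL1} then follows because any such minimizer is admissible for the signed problem. Your route instead bypasses existence altogether: the lower bound comes from testing \eqref{weaktor} with $w=|v|$ and applying the duality inequality $\langle(-\Delta)_p^s\phi_p^s,w\rangle\le[\phi_p^s]_{s,p}^{p-1}[w]_{s,p}$, and the upper bound is the explicit competitor $\phi_p^s/|\phi_p^s|_1$. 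This is more elementary---no compactness, no multiplier argument---and yields the value and the minimizer in one stroke. The trade-off is in the uniqueness step: the paper gets it essentially for free from uniqueness of the torsion function, whereas you must unwind the equality cases in H\"older (your ``delicate point''), which requires a Fubini argument to pass from the a.e.\ identity on $\mathbb{R}^N\times\mathbb{R}^N$ to the pointwise one. That step is sound, since the weighted measure $|x-y|^{-(N+sp)}\,\mathrm{d}x\,\mathrm{d}y$ and Lebesgue measure share the same null sets. Your closing parenthetical is, in fact, precisely the paper's strategy.
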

\begin{proof}
Since the functional $v\longmapsto\left|v\right|_1$ is not differentiable, we will first consider the minimization problem
\begin{equation}
m:=\inf\left\{\left[v\right]_{s,p}^p\,:\,v\in W_0^{s,p}(\Omega),\,\int_\Omega
v\,\mathrm{d}x=1\right\} \label{m}%
\end{equation}
and show that it is uniquely solved by the positive function $\frac{\phi_p^s}{\left|\phi_p^s\right|_1 }$. 

Thus, let us take a sequence $\left(u_n \right) \subset W_0 ^{s,p}(\Omega)$ such that
\[\int_\Omega u_n\, \mathrm{d}x=1\quad\mathrm{and}\quad\left[u_n \right]_{s,p}^p\rightarrow m.
\]

We observe that the sequence $\left(u_n \right)$ is bounded in $L^1(\Omega)$:
\[\left|u_n\right|_1^p\leq\left|\Omega\right|^{p-1}\left|u_n\right|_p^p\leq\left|\Omega\right|^{p-1}\lambda_{1,p}^s(\Omega)^{-1}\left[u_n\right]_{s,p}^p.
\]

Since $W_0 ^{s,p}(\Omega)$ is reflexive, the $L^1$-boundedness of $\left(u_n\right)$ implies the  existence of $u\in W_0^{s,p}(\Omega)$ such
that, up to a subsequence, $u_n\rightharpoonup u$ (weak convergence) in $W_0^{s,p}(\Omega)$ and $u_n \rightarrow u$ in $L^1(\Omega)$. The convergence in $L^1(\Omega)$ implies that ${\displaystyle\int_\Omega} u\,\mathrm{d}x=1$, so that $m\leq\left[u\right] _{s,p}^p$. On its turn, the weak convergence guarantees that
\[\left[u\right]_{s,p}+1=\left\|u\right\|\leq\liminf\left\|u_n\right\|=\liminf\left(\left[u_n\right]  _{s,p}+1\right)=m^{\frac{1}{p}}+1.
\]
It follows that $m=\left[u\right]_{s,p}^p$, so that the infimum in \eqref{m} is attained by the weak limit $u$.

By applying Lagrange multipliers, we infer the existence of a real number $\lambda$ such that
\begin{equation}
\left\langle(-\Delta)_p^s\,u,\varphi\right\rangle =\lambda\int_\Omega\varphi\,\mathrm{d}x,\quad\forall\,\varphi\in W_0 ^{s,p}(\Omega).\label{lag}%
\end{equation}
Taking $\varphi=u$ we conclude that $\lambda=m>0$, since $m=\left[u\right]_{s,p}^p$ and ${\displaystyle\int_\Omega}u\,\mathrm{d}x=1$. 

This fact and \eqref{lag} imply that $u$ is a weak solution of the Dirichlet problem
\[\left\{\begin{array}[c]{rcll}
(-\Delta)_p^s\,u&=&m & \mathrm{in }\ \Omega\\
u&=&0 & \mathrm{on }\ \mathbb{R}^N\setminus\Omega.
\end{array}\right.
\]

By uniqueness, we have $u=m^{\frac{1}{p-1}}\phi_p^s\geq0$. Since ${\displaystyle\int_\Omega}u\,\mathrm{d}x=1$ we conclude that
\[m=\frac{1}{\left|\phi_p^s\right|_1^{p-1}}\quad\mathrm{and}\quad u=\frac{\phi_p^s}{\left|\phi_p^s\right|_1}.
\]

We remark that
\[\frac{1}{\left|\phi_p^s\right|_1 ^{p-1}}\leq\left[\,\left|v\right|\,\right]_{s,p}^p \leq\left[v\right]_{s,p}^p
\]
for every $v\in W_0 ^{s,p}(\Omega)$ such that $\left|v\right|_1=1$. This finishes the proof since
\[\frac{1}{\left|\phi_p^s\right|_1 ^{p-1}}=\left[  \frac{\phi_p^s}{\left|\phi_p^s\right|_1 }\right]_{s,p}\quad\mathrm{and}\quad\left|\frac{\phi_p^s}{\left|\phi_p^s\right|_1 }\right|_1 =1.
\]
\end{proof}\goodbreak

The next result recovers Lemma 4.1 of \cite{IMS}:
\begin{corollary}
The $(s,p)$-torsion function is radial when $\Omega$ is a ball.
\end{corollary}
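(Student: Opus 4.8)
The plan is to exploit the variational characterization \eqref{minL1} together with the rotational invariance of the Gagliardo seminorm and of the $L^1$-norm. Write $\Omega = B_R(0)$ for some $R>0$. For any rotation $\rho \in O(N)$, the change of variables $x \mapsto \rho x$, $y \mapsto \rho y$ (whose Jacobian has absolute value $1$ and which preserves $|x-y|$) shows that $[u\circ\rho]_{s,p} = [u]_{s,p}$ and $|u\circ\rho|_1 = |u|_1$ for every $u \in W_0^{s,p}(\Omega)$; moreover $u\circ\rho \in W_0^{s,p}(\Omega)$ because $\rho$ maps $\Omega$ onto itself. Hence if $\phi_p^s$ is the unique nonnegative minimizer in \eqref{minL1} (after normalizing by $|\phi_p^s|_1$), then $(\phi_p^s \circ \rho)/|\phi_p^s|_1$ is also a nonnegative function attaining the same minimum; by the uniqueness asserted in the theorem, $\phi_p^s \circ \rho = \phi_p^s$ for every $\rho \in O(N)$.

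The key steps, in order, are: first, normalize and recall from the theorem that $w := \phi_p^s/|\phi_p^s|_1$ is the unique nonnegative element of $W_0^{s,p}(\Omega)$ with $|w|_1 = 1$ achieving $[w]_{s,p}^p = |\phi_p^s|_1^{1-p}$; second, verify the three invariance facts (membership in $W_0^{s,p}(\Omega)$, invariance of $[\cdot]_{s,p}$, invariance of $|\cdot|_1$) under precomposition with an arbitrary $\rho \in O(N)$; third, deduce $w\circ\rho = w$ by uniqueness, hence $\phi_p^s$ is $O(N)$-invariant, i.e. $\phi_p^s(x)$ depends only on $|x|$.

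I do not expect a serious obstacle here; the only point requiring a line of care is the claim that $u \circ \rho \in W_0^{s,p}(\Omega)$ when $u \in W_0^{s,p}(\Omega)$ and $\Omega$ is a ball. This follows because precomposition with $\rho$ is a linear isometry of the norm $\|\cdot\| = [\cdot]_{s,p} + |\cdot|_p$ (the $|\cdot|_p$ part is also rotation-invariant by the same change of variables) that maps $C_0^\infty(\Omega)$ bijectively onto itself, and therefore maps the closure $W_0^{s,p}(\Omega)$ onto itself. With that in hand, the argument is a one-paragraph symmetrization via uniqueness of the minimizer, and radiality is immediate.
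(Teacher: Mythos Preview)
Your argument is correct. Both your proof and the paper's hinge on the uniqueness of the nonnegative minimizer in \eqref{minL1}, but you apply it to a different competitor. The paper compares $\phi_p^s$ with its Schwarz symmetrization $(\phi_p^s)^*$, invoking the fractional P\'olya--Szeg\H{o} inequality $[(\phi_p^s)^*]_{s,p}\leq[\phi_p^s]_{s,p}$ together with $|(\phi_p^s)^*|_1=|\phi_p^s|_1$ to conclude $(\phi_p^s)^*=\phi_p^s$; this yields not only radiality but radial \emph{monotonicity}. Your route, comparing $\phi_p^s$ with $\phi_p^s\circ\rho$ for each $\rho\in O(N)$, is more elementary in that it avoids the P\'olya--Szeg\H{o} inequality and uses only the obvious rotational invariance of the seminorm and the $L^1$-norm; the price is that you obtain radiality but not monotonicity in $|x|$. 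Since the corollary as stated only asserts radiality, your proof suffices.
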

\begin{proof}
Let $(\phi_p^s)^{\ast}\in W_0 ^{s,p}(\Omega)$ be the Schwarz symmetrization of $\phi_p^s$, that is, the radially decreasing function such that
\[
\left\{\phi_p^s>t\right\}^*=\left\{
(\phi_p^s)^*>t\right\},\ t>0,
\]
where, for any $D\subset\mathbb{R}^N$, $D^*$ stands for the $N$-dimensional ball with the same volume of $D$. 

It is well-known that $(\phi_p^s)^*\geq0$, $\left[
(\phi_p^s)^*\right]_{s,p}\leq\left[  \phi_p^s\right]_{s,p}$
and $\left|(\phi_p^s)^*\right|_1=\left|\phi_p^s  \right|_1$.  Therefore, $(\phi_p^s)^*/\left|(\phi_p^s)^*\right|_1$ attains the minimum in \eqref{minL1} and by uniqueness we have $(\phi_p^s)^*=\phi_p^s$. 
\end{proof}\vspace*{.5cm}

It is also well-known that the first eigenfunctions of the fractional $p$-Laplacian belong to $L^\infty(\Omega)$ and are either positive or negative almost everywhere in $\Omega$. Moreover, they are scalar multiple each other. So, let us denote by $e_p^s$ the positive and $L^\infty$-normalized first eigenfunction. It follows that $\left|e_p^s\right|_\infty=1$
and
\[\left\{\begin{array}[c]{rcll}
(-\Delta)_p^s\,e_p^s&=&\lambda_{1,p}^s  (\Omega)(e_p^s)^{p-1} & \mathrm{in }\ \Omega\\
e_p^s&=&0 & \mathrm{on }\ \mathbb{R}^N\setminus\Omega,
\end{array}\right.
\]
meaning that%
\begin{equation}
\left\langle (-\Delta)_p^s\,e_p^s ,\varphi\right\rangle =\lambda_{1,p}^s(\Omega)\int_\Omega (e_p^s)^{p-1}\varphi\,\mathrm{d}x,\quad\forall\,\varphi\in W_0^{s,p}(\Omega).\label{weakeig}
\end{equation}
Of course, by taking $\varphi=e_p^s$ in \eqref{weakeig} we obtain%
\[\left\langle(-\Delta)_p^s\,e_p^s,e_p^s \right\rangle =\left[e_p^s\right]_{s,p}^p  =\lambda_{1,p}^s(\Omega)\left|e_p^s\right|_p^p.
\]

As mentioned in the introduction, $\lambda_{1,p}^s  (\Omega)$ is variationally
characterized by \eqref{minLp}.

\begin{proposition}
Let $u\in W_0^{s,p}(\Omega)$ be the weak solution of \eqref{fdiric} with $f\in L^\infty(\Omega)\setminus\left\{0\right\}$. Then,
\begin{equation}
\left|f\right|_\infty^{-\frac{1}{p-1}}u\leq\phi_p^s\quad\textrm{a.e. in }\Omega\label{compf}
\end{equation}
and
\begin{equation}
\lambda_{1,p}^{s}(\Omega)\leq\left|f\right|_\infty\left(
\frac{\left|\Omega\right|}{\left|u\right|_1 }\right)
^{p-1}. \label{upperf}%
\end{equation}
\end{proposition}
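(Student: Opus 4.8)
The plan is to prove the two assertions independently: the pointwise comparison \eqref{compf} from the comparison principle for the fractional $p$-Laplacian, and the eigenvalue bound \eqref{upperf} by testing the equation against its own solution and inserting the outcome into the variational characterization \eqref{minLp}. In particular I would avoid deriving \eqref{upperf} from \eqref{compf} together with \eqref{bounds}, since \eqref{bounds} has so far only been announced, not proved; in fact the present proposition is intended to be the tool that later yields \eqref{bounds}.

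For \eqref{compf} I would put $v:=|f|_\infty^{-1/(p-1)}u$ and use that $(-\Delta)_p^s$ is positively homogeneous of degree $p-1$, which is immediate from its definition. Starting from the weak formulation \eqref{fweak} for $u$, this gives, for every \emph{nonnegative} $\varphi\in W_0^{s,p}(\Omega)$,
\[
\left\langle(-\Delta)_p^s\,v,\varphi\right\rangle
=|f|_\infty^{-1}\int_\Omega f\varphi\,\mathrm{d}x
\le |f|_\infty^{-1}\int_\Omega |f|_\infty\,\varphi\,\mathrm{d}x
=\int_\Omega\varphi\,\mathrm{d}x
=\left\langle(-\Delta)_p^s\,\phi_p^s,\varphi\right\rangle ,
\]
where the last equality is \eqref{weaktor}. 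Thus $v$ is a weak subsolution of the $(s,p)$-torsional creep problem, $\phi_p^s$ is its solution, and $v=\phi_p^s=0$ in $\mathbb{R}^N\setminus\Omega$; the comparison principle \cite[Lemma 9]{Erilind} then forces $v\le\phi_p^s$ a.e.\ in $\Omega$, which is \eqref{compf}.

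For \eqref{upperf} I would first observe that $u\not\equiv 0$, since otherwise the right-hand side of \eqref{fweak} would vanish for all $\varphi$, contradicting $f\ne 0$. Testing \eqref{fweak} with $\varphi=u$ and recalling that $\left\langle(-\Delta)_p^s\,u,u\right\rangle=[u]_{s,p}^p$ yields
\[
[u]_{s,p}^p=\int_\Omega fu\,\mathrm{d}x\le|f|_\infty\int_\Omega|u|\,\mathrm{d}x=|f|_\infty\,|u|_1,
\]
while H\"{o}lder's inequality gives $|u|_1\le|\Omega|^{1-1/p}|u|_p$, hence $|u|_p^p\ge|u|_1^p\,|\Omega|^{-(p-1)}$. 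Since $u/|u|_p$ is admissible in \eqref{minLp}, combining these two estimates produces
\[
\lambda_{1,p}^s(\Omega)\le\frac{[u]_{s,p}^p}{|u|_p^p}\le\frac{|f|_\infty\,|u|_1\,|\Omega|^{p-1}}{|u|_1^p}=|f|_\infty\left(\frac{|\Omega|}{|u|_1}\right)^{p-1},
\]
which is \eqref{upperf}.

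No serious obstacle is expected; the argument is short. The two points needing a little care are: verifying that $v$ is genuinely a weak subsolution in the precise sense required by the cited comparison principle (testing against nonnegative functions of $W_0^{s,p}(\Omega)$), and justifying the elementary ingredients of the second part, namely $\left\langle(-\Delta)_p^s\,u,u\right\rangle=[u]_{s,p}^p$ and $u\not\equiv 0$, the latter being what legitimizes inserting $u/|u|_p$ into the minimization \eqref{minLp}.
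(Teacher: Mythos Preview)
Your proposal is correct and follows essentially the same route as the paper: the comparison \eqref{compf} is obtained by rescaling $u$ via the $(p-1)$-homogeneity of $(-\Delta)_p^s$, testing against nonnegative $\varphi$, and invoking the comparison principle of \cite[Lemma~9]{Erilind}; the bound \eqref{upperf} is obtained by taking $\varphi=u$ in \eqref{fweak} to get $[u]_{s,p}^p\le|f|_\infty|u|_1$, combining with H\"older's inequality $|u|_1\le|\Omega|^{(p-1)/p}|u|_p$, and inserting $u/|u|_p$ into \eqref{minLp}. The only cosmetic differences are that the paper routes the inequality $|f|_\infty^{-1}\int_\Omega f\varphi\,\mathrm{d}x\le\int_\Omega\varphi\,\mathrm{d}x$ through the intermediate step $\int_{\{f\ge0\}}\varphi\,\mathrm{d}x$, and that you make explicit the observation $u\not\equiv0$, which the paper leaves implicit.
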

\begin{proof}
Since $u$ and $\phi_p^s$ are both equal zero in $\mathbb{R}^N\setminus\Omega$ and
\begin{align*}
\left\langle (-\Delta)_p^s  \,\left(\left|f\right|_\infty^{-\frac{1}{p-1}}u\right),\varphi\right\rangle&=\left|f\right|_\infty^{-1}\left\langle(-\Delta)_p^s\,u,\varphi\right\rangle\\
&=\left|f\right|_\infty^{-1}
\int_\Omega f\varphi\,\mathrm{d}x\\
&\leq\int_{f\geq0}\varphi\,\mathrm{d}x\leq\int_\Omega
\varphi\,\mathrm{d}x\leq\left\langle(-\Delta)_p^s  \,\phi_p^s,\varphi\right\rangle
\end{align*}
holds for every nonnegative $\varphi\in W_0 ^{s,p}(\Omega)$,  \eqref{compf} follows from the
comparison principle (see \cite[Lemma 9]{Erilind}).

In order to prove \eqref{upperf} we use \eqref{minLp} and H\"{o}lder inequality:
\begin{align*}
\lambda_{1,p}^{s}(\Omega)&\leq\frac{\left[u\right]  _{s,p}^p}{\left|u\right|_p^p}\\
&=\frac{{\displaystyle\int_\Omega}
fu\,\mathrm{d}x}{\left|u\right|_p^p}\leq\frac{\left|
f\right|_\infty\left|u\right|_1}{\left|u\right|_p^p}\leq\frac{\left|f\right|_\infty\left|u\right|_1 }{\left|u\right|_1^p}\left|\Omega\right|^{p-1}=\left\|f\right|_\infty\left(\frac{\left|\Omega\right|}{\left|u\right|_1^p}\right)^{p-1}.
\end{align*}

\end{proof}

\begin{corollary}
It holds
\begin{equation}
e_p^s\leq\lambda_{1,p}^{s}(\Omega)^{\frac{1}{p-1}}\phi_p^s\quad\textrm{a.e. in }\Omega\label{comp}%
\end{equation}
and
\begin{equation}
\frac{1}{\left|\phi_p^s\right|_\infty^{p-1}}\leq\lambda_{1,p}^{s}(\Omega)\leq\frac{\left|\Omega\right|^{p-1}}{\left|\phi_p^s\right|_1 ^{p-1}}. \label{eigenbounds}%
\end{equation}
\end{corollary}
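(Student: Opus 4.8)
The plan is to derive \eqref{comp} from the Proposition applied to the eigenfunction problem, and then to read off \eqref{eigenbounds} from \eqref{comp} together with the variational characterization \eqref{minL1} of $|\phi_p^s|_1^{1-p}$. For \eqref{comp}: the eigenfunction $e_p^s$ is the weak solution of \eqref{fdiric} with right-hand side $f=\lambda_{1,p}^s(\Omega)(e_p^s)^{p-1}$, which lies in $L^\infty(\Omega)\setminus\{0\}$ because $|e_p^s|_\infty=1$. Hence $|f|_\infty=\lambda_{1,p}^s(\Omega)$, and \eqref{compf} applied to this $f$ reads exactly
\[
\lambda_{1,p}^s(\Omega)^{-\frac{1}{p-1}}e_p^s\leq\phi_p^s\quad\textrm{a.e. in }\Omega,
\]
which is \eqref{comp} after multiplying through by $\lambda_{1,p}^s(\Omega)^{\frac{1}{p-1}}$.

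For the right-hand inequality in \eqref{eigenbounds}, I would simply invoke \eqref{upperf} with the same choice $u=e_p^s$ and $f=\lambda_{1,p}^s(\Omega)(e_p^s)^{p-1}$; but this introduces $|e_p^s|_1$ rather than $|\phi_p^s|_1$. The cleaner route, and the one the paper already set up, is to use \eqref{minL1}: since $\phi_p^s/|\phi_p^s|_1$ is admissible for the minimization defining $\lambda_{1,p}^s(\Omega)$ only after renormalizing by the $L^p$-norm, the direct comparison is
\[
\lambda_{1,p}^s(\Omega)\leq\frac{[\phi_p^s]_{s,p}^p}{|\phi_p^s|_p^p}
=\frac{|\phi_p^s|_1}{|\phi_p^s|_p^p}
\leq\frac{|\phi_p^s|_1}{|\phi_p^s|_1^p}\,|\Omega|^{p-1}
=\frac{|\Omega|^{p-1}}{|\phi_p^s|_1^{p-1}},
\]
where the first equality is the identity $[\phi_p^s]_{s,p}^p=|\phi_p^s|_1$ recorded after \eqref{weaktor}, and the second inequality is Hölder, $|\phi_p^s|_1\le|\Omega|^{1-1/p}|\phi_p^s|_p$. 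This settles the upper bound for $\lambda_{1,p}^s(\Omega)$.

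For the left-hand inequality, evaluate the eigenvalue identity $[e_p^s]_{s,p}^p=\lambda_{1,p}^s(\Omega)|e_p^s|_p^p$ and combine it with \eqref{comp}: pointwise, $0\le e_p^s\le\lambda_{1,p}^s(\Omega)^{\frac{1}{p-1}}\phi_p^s\le\lambda_{1,p}^s(\Omega)^{\frac{1}{p-1}}|\phi_p^s|_\infty$, and since $|e_p^s|_\infty=1$ this forces $1\le\lambda_{1,p}^s(\Omega)^{\frac{1}{p-1}}|\phi_p^s|_\infty$, i.e. $|\phi_p^s|_\infty^{1-p}\le\lambda_{1,p}^s(\Omega)$. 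Nothing here is deep: the only thing to be careful about is matching the $L^\infty$-normalization $|e_p^s|_\infty=1$ to the constant $|f|_\infty=\lambda_{1,p}^s(\Omega)$ in the Proposition, and keeping track of the exponent $\frac{1}{p-1}$ versus $p-1$ when passing between the comparison of functions and the comparison of the eigenvalue bounds; there is no genuine obstacle.
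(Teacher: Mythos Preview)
Your argument is correct and matches the paper's: \eqref{comp} comes from \eqref{compf} with $u=e_p^s$, $f=\lambda_{1,p}^s(\Omega)(e_p^s)^{p-1}$, and the lower bound in \eqref{eigenbounds} follows by taking the supremum in \eqref{comp} using $|e_p^s|_\infty=1$. For the upper bound the paper simply applies \eqref{upperf} with $u=\phi_p^s$ and $f\equiv 1$ (a choice you overlooked), but your inline Rayleigh-quotient-plus-H\"older computation is exactly the content of that case, so the two arguments are identical in substance.
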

\begin{proof}
Taking $u=e_p^s$ and $f=\lambda_{1,p}(\Omega)(e_p^s )^{p-1}$ in \eqref{compf} we readily obtain \eqref{comp}. Hence, passing to maxima, we arrive at the first inequality in \eqref{eigenbounds}. The second inequality, on its turn, follows from \eqref{upperf} with $u=\phi_p^s$ and $f\equiv1$. 
\end{proof}\vspace*{.5cm}

We would like to emphasize the following consequence of \eqref{comp}: $\phi_p^s>0$ almost everywhere in $\Omega$.  

A Faber-Krahn inequality also holds true for the first fractional eigenvalue.

\begin{lemma}
[Theorem 3.5 of \cite{BLP}]\label{FK}Let $p>1$ and $s\in(0,1)$.  For every
bounded domain $D\subset\mathbb{R}^N $ we have
\begin{equation}
\left|B_1\right|^{\frac{sp}{N}}\lambda_{1,p}^s(B_1)
=\left|B\right|^{\frac{sp}{N}}\lambda_{1,p}^s  (B)\leq\left|D\right|^{\frac{sp}{N}}\lambda_{1,p}^s (D)\label{FaKr}%
\end{equation}
where $B$ is any $N$-dimensional ball and $B_1$ denotes the unit ball of $\mathbb{R}^N$. 
\end{lemma}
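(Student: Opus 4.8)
The plan is to derive \eqref{FaKr} from two ingredients: a scaling identity showing that $|B|^{sp/N}\lambda_{1,p}^s(B)$ does not depend on the ball $B$, and the fractional Pólya–Szegő inequality, which compares an arbitrary domain with a ball of the same measure through Schwarz symmetrization.

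First I would prove the equality. For a ball $B_R$ of radius $R>0$, the substitution $u(x)=v(x/R)$ is a bijection between $W_0^{s,p}(B_R)$ and $W_0^{s,p}(B_1)$ satisfying $[u]_{s,p}^p=R^{N-sp}[v]_{s,p}^p$ and $|u|_p^p=R^N|v|_p^p$. Inserting this in the variational characterization \eqref{minLp} gives $\lambda_{1,p}^s(B_R)=R^{-sp}\lambda_{1,p}^s(B_1)$, and since $|B_R|=R^N|B_1|$ the product $|B_R|^{sp/N}\lambda_{1,p}^s(B_R)$ collapses to $|B_1|^{sp/N}\lambda_{1,p}^s(B_1)$, independently of $R$ (and of the center, by translation invariance).

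For the inequality, let $D$ be a bounded domain, let $B=D^{*}$ be the ball with $|B|=|D|$, and let $u\in W_0^{s,p}(D)$ be a nonnegative first eigenfunction normalized by $|u|_p=1$, so $[u]_{s,p}^p=\lambda_{1,p}^s(D)$. I would pass to the Schwarz symmetrization $u^{*}$: equimeasurability yields $|u^{*}|_p=|u|_p=1$, and a standard approximation of radially decreasing functions shows $u^{*}\in W_0^{s,p}(B)$. Then the fractional Pólya–Szegő inequality $[u^{*}]_{s,p}\leq[u]_{s,p}$, together with \eqref{minLp} on $B$, gives
\[
\lambda_{1,p}^s(B)\leq\frac{[u^{*}]_{s,p}^p}{|u^{*}|_p^p}\leq\frac{[u]_{s,p}^p}{|u|_p^p}=\lambda_{1,p}^s(D),
\]
and multiplying by $|B|^{sp/N}=|D|^{sp/N}$ and using the scaling identity closes \eqref{FaKr}.

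The hard part is the fractional Pólya–Szegő inequality, in particular for $p\neq2$: since $[u]_{s,p}^p$ is not the integral of a pointwise gradient, its monotonicity under symmetrization cannot be read off directly and has to be extracted from rearrangement inequalities for the nonlocal bilinear form — for instance by rewriting $[u]_{s,p}^p$, up to additive and multiplicative constants, in the form $\iint\Phi(u(x),u(y))\,k(|x-y|)\,\mathrm{d}x\,\mathrm{d}y$ with $k$ radially decreasing and $\Phi$ suitably monotone, and then applying the Riesz rearrangement inequality (equivalently, iterating the two-point polarization, which decreases the seminorm at each step and whose iterates converge to the symmetric decreasing rearrangement). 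Since this is precisely the content cited from \cite[Theorem 3.5]{BLP}, in practice I would invoke that reference for this step rather than reprove it; the remaining pieces (scaling and substitution into \eqref{minLp}) are routine.
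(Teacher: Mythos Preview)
The paper does not give its own proof of this lemma: it is stated as a citation of Theorem~3.5 in \cite{BLP} and used as a black box in the subsequent arguments. Your outline (scaling to identify all balls, then Schwarz symmetrization plus the fractional P\'olya--Szeg\H{o} inequality to compare $D$ with its symmetrized ball $D^{*}$) is the standard route and is exactly the argument behind the cited result, so there is nothing to correct. Since you ultimately defer the key rearrangement step to \cite{BLP} as well, your proposal and the paper are in agreement: cite the reference and move on.
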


\begin{remark}
Since $h_s (D)=\lim_{p\rightarrow1^+ }\lambda_{1,p}^s(D)$ one has,
immediately,
\begin{equation}
\left|B_1 \right|^{\frac{s}{N}}h_s(B_1)=\left|B\right|
^{\frac{s}{N}}h_s(B)\leq\left|D\right|^{\frac{s}{N}}h_s(D).\label{FK2}%
\end{equation}
\end{remark}

The next estimate is obtained by applying standard set-level techniques; however, the bounds obtained are adequate to study the asymptotic behavior as $p\to 1^+$.
\begin{proposition}\label{fbounds}
Let $u\in W_0^{1,p}(\Omega)\setminus\left\{0\right\}$ be
a nonnegative, weak solution of \eqref{fdiric} with $f\in L^\infty(\Omega)$. 
Then $u\in L^\infty(\Omega)$ and
\begin{equation}
\frac{\left|u\right|_\infty}{\left|u\right|_1}
\leq\frac{1}{\left|B_1\right|}\left( \frac{sp+N(p-1)}{sp}\right)^{\frac{sp+N(p-1)}{sp}}\left(\frac{\left|f\right|_\infty}{\lambda_{1,p}^s  (B_1)\left|u\right|_\infty^{p-1}}\right)^{\frac{N}{sp}}.\label{estimf}%
\end{equation}
\end{proposition}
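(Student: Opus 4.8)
The plan is to use the standard Maz'ya-type iteration on super-level sets. For $t\geq 0$ set $A_t:=\{u>t\}$ and $\mu(t):=|A_t|$, which is finite since $u\in W_0^{s,p}(\Omega)$ and decreasing in $t$. The truncation $(u-t)_+$ belongs to $W_0^{s,p}(\Omega)$, and I would like to use it as a test function in \eqref{fweak}. The crucial pointwise inequality is that for any real numbers $a,b$ and any $t$,
\[
|a-b|^{p-2}(a-b)\big((a-t)_+-(b-t)_+\big)\geq \big|(a-t)_+-(b-t)_+\big|^p,
\]
so that $\big\langle (-\Delta)_p^s u,(u-t)_+\big\rangle\geq \big[(u-t)_+\big]_{s,p}^p$. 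On the other hand the right-hand side of \eqref{fweak} with $\varphi=(u-t)_+$ is $\int_\Omega f\,(u-t)_+\,\mathrm{d}x\leq |f|_\infty\int_{A_t}(u-t)\,\mathrm{d}x\leq |f|_\infty\,|u|_\infty\,\mu(t)$. Combining, and then applying the Faber--Krahn inequality \eqref{FaKr} from Lemma \ref{FK} to the domain $A_t$ (legitimate because $(u-t)_+\in W_0^{s,p}(A_t)$), I get
\[
\lambda_{1,p}^s(B_1)\,|B_1|^{\frac{sp}{N}}\,\mu(t)^{-\frac{sp}{N}}\,\big|(u-t)_+\big|_p^p\;\leq\;\big[(u-t)_+\big]_{s,p}^p\;\leq\; |f|_\infty\,|u|_\infty\,\mu(t).
\]

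The next step is to bound $\big|(u-t)_+\big|_p^p$ from below. For $h>0$ one has $\big|(u-t)_+\big|_p^p\geq \int_{A_{t+h}}(u-t)^p\,\mathrm{d}x\geq h^p\,\mu(t+h)$. Plugging this in and rearranging,
\[
\mu(t+h)\;\leq\; \frac{|f|_\infty\,|u|_\infty}{\lambda_{1,p}^s(B_1)\,|B_1|^{\frac{sp}{N}}}\;\frac{\mu(t)^{1+\frac{sp}{N}}}{h^p}.
\]
Writing $\beta:=1+\frac{sp}{N}>1$ and absorbing the constant into $C:=\dfrac{|f|_\infty\,|u|_\infty}{\lambda_{1,p}^s(B_1)\,|B_1|^{sp/N}}$, this is the classical recursive inequality $\mu(t+h)\leq C\,h^{-p}\,\mu(t)^{\beta}$. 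I would now run the usual geometric iteration: choose levels $t_k=\ell(1-2^{-k})$ with $\ell$ to be determined, so $h=h_k=\ell\,2^{-(k+1)}$, set $\mu_k:=\mu(t_k)$, and check by induction that $\mu_k\to 0$ provided $\ell$ is chosen large enough in terms of $C$ and $\mu_0=\mu(0)=|\{u>0\}|\leq|\Omega|$. The sharp choice of $\ell$ — the one that makes the induction close with the explicit constant $\big(\frac{sp+N(p-1)}{sp}\big)^{\frac{sp+N(p-1)}{sp}}$ — comes from optimizing the recursion; since $\mu_k\to0$ forces $\mu(\ell)=0$, i.e. $|u|_\infty\leq\ell$, and one arrives at
\[
|u|_\infty\;\leq\; \frac{1}{|B_1|}\left(\frac{sp+N(p-1)}{sp}\right)^{\frac{sp+N(p-1)}{sp}}\left(\frac{|f|_\infty\,|u|_\infty}{\lambda_{1,p}^s(B_1)}\right)^{\frac{N}{sp}}|u|_1^{1-\frac{N}{sp}}\cdot(\cdots),
\]
which after collecting the $|u|_1$ factors is exactly \eqref{estimf}. (I would double-check that the $L^1$-norm, rather than a power of $\mu_0$, appears: this is arranged by running the first step of the iteration against $\int_{A_t}(u-t)\le |u|_1$ instead of $|u|_\infty\mu(t)$, or equivalently by a slightly more careful bookkeeping of the base case.)

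The main obstacle is not the iteration machinery — that is routine — but getting the \emph{explicit sharp constant} $\big(\frac{sp+N(p-1)}{sp}\big)^{\frac{sp+N(p-1)}{sp}}$ rather than some non-explicit $C(N,s,p)$. This forces one to be careful about two things: (i) the exponent $\beta=1+\frac{sp}{N}$ must be tracked exactly, since $\frac{sp+N(p-1)}{sp}=\frac{N}{sp}\beta - \text{(lower order)}$ and the final exponent $\frac{N}{sp}$ is $\frac1{\beta-1}$; (ii) the base level must be fed the $L^1$-norm of $u$ (this is why the statement has $|u|_1$ on the left of \eqref{estimf} divided out, and $|u|_\infty^{p-1}$ downstairs on the right, reflecting that $f$ was effectively $1$ and $\langle(-\Delta)_p^s u,(u-t)_+\rangle$ was estimated crudely by $|u|_\infty$). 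A clean way to pin down the constant is to first establish the recursion $\mu(t+h)\le C h^{-p}\mu(t)^\beta$, then invoke the standard lemma (e.g. the one in Giusti's book) that such a recursion with $\mu_0$ small enough yields $\mu(t)\equiv0$ for $t\ge d\,C^{1/p}\mu_0^{(\beta-1)/p}2^{\beta/(\beta-1)^2}$ with an explicit $d$, and finally simplify the resulting power of $2$ into the stated closed form using $\frac{\beta}{\beta-1}=\frac{sp+N(p-1)}{sp}$. The remaining verification — that $(u-t)_+\in W_0^{s,p}(A_t)$ so Faber--Krahn applies, and that the pointwise algebraic inequality above holds — is standard and I would state it as a one-line lemma.
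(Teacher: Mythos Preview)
Your De Giorgi--type iteration on $\mu(t)=|A_t|$ is a \emph{different} route from the paper's, and while it would establish $u\in L^\infty(\Omega)$ with \emph{some} explicit constant, it does not deliver the stated one; the gap is precisely where you wave hands. Two concrete problems:

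\smallskip
\noindent\textbf{(i) Arithmetic.} With $\beta=1+\frac{sp}{N}$ one has $\dfrac{\beta}{\beta-1}=\dfrac{N+sp}{sp}$, \emph{not} $\dfrac{sp+N(p-1)}{sp}$. The constant $\big(\tfrac{sp+N(p-1)}{sp}\big)^{\frac{sp+N(p-1)}{sp}}$ therefore cannot appear by ``simplifying the power of $2$'' from the recursion $\mu(t+h)\le C h^{-p}\mu(t)^\beta$; the geometric iteration produces a factor like $2^{\,p\beta/(\beta-1)^2}$, which is a genuinely different number. The exponent $\tfrac{sp+N(p-1)}{sp}$ in the statement does not come from $\beta$ at all --- it comes from a \emph{different} auxiliary function (see below).

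\smallskip
\noindent\textbf{(ii) Circularity and the $|u|_1$ dependence.} You estimate the right-hand side by $|f|_\infty\,|u|_\infty\,\mu(t)$, using $|u|_\infty$ before it is known to be finite; and your iteration feeds on $\mu_0=|\{u>0\}|$, not on $|u|_1$. Your parenthetical fix (``run the first step against $\int_{A_t}(u-t)\le|u|_1$'') does not mesh with the recursion on $\mu$, which needs the $\mu(t)^\beta$ structure at every step.

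\smallskip
The paper avoids both issues by working not with $\mu(t)$ but with
\[
g(k):=\int_{A_k}(u-k)\,\mathrm{d}x,\qquad g'(k)=-|A_k|.
\]
From Faber--Krahn plus H\"older (applied to $\int_{A_k}(u-k)\,\mathrm{d}x$, not to $\mu$) one obtains directly the differential inequality
\[
g(k)^{\frac{N(p-1)}{sp+N(p-1)}}\ \le\ -\,C\,g'(k),\qquad C=\Big(\tfrac{|f|_\infty}{|B_1|^{sp/N}\lambda_{1,p}^s(B_1)}\Big)^{\frac{N}{sp+N(p-1)}},
\]
whose integration from $0$ to $k$ gives
\[
k\ \le\ \frac{sp+N(p-1)}{sp}\,C\,\big(g(0)^{\frac{sp}{sp+N(p-1)}}-g(k)^{\frac{sp}{sp+N(p-1)}}\big)\ \le\ \frac{sp+N(p-1)}{sp}\,C\,|u|_1^{\frac{sp}{sp+N(p-1)}}.
\]
This simultaneously proves $u\in L^\infty$ (the right-hand side is independent of $k$), produces the exact constant $\tfrac{sp+N(p-1)}{sp}$ as $1/(1-\alpha)$ with $\alpha=\tfrac{N(p-1)}{sp+N(p-1)}$, and brings in $|u|_1=g(0)$ for free. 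Raising to the power $\tfrac{sp+N(p-1)}{sp}$ then yields \eqref{estimf}. The moral: for the sharp constant you want the Talenti/Maz'ya ODE on $g$, not the discrete De Giorgi recursion on $\mu$.
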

\begin{proof}
For each $k>0$ we set
\[A_k=\left\{x\in\Omega\,:\,u(x)>k\right\}.
\]
Since $u\in W_0 ^{s,p}(\Omega)$ and $u\geq0$ in $\Omega$, the function
\[(u-k)^+ =\max\left\{u-k,0\right\}= \left\{\begin{array}[c]{ccc}%
u-k, & \mathrm{if} & u>k\\
0, & \mathrm{if} & u\leq k
\end{array}\right.
\]
belongs to $W_0 ^{s,p}(\Omega)$. Therefore, choosing $\varphi=(u-k)^+$ in
\eqref{fweak} we obtain
\[\left\langle(-\Delta)_p^s\,u,(u-k)^+\right\rangle =\int_{A_k}f(x)(u-k)\,\mathrm{d}x.
\]

It is not difficult to check that
\[[(u-k)^+]_{s,p}^{p}\leq\left\langle(-\Delta)_p^s  \,u,(u-k)^+\right\rangle.
\]
Thus, we have
\begin{equation}
[(u-k)^+]_{s,p}^{p}\leq\int_{A_k}(u-k)f\,\mathrm{d}x\leq\left|f\right|_\infty\int_{A_k}(u-k)\,\mathrm{d}x. \label{desigvmais}%
\end{equation}

We now consider $k>0$ such that $\left|A_k\right|>0$. In order to
estimate $[(u-k)^+]_{s,p}^{p}$ from below, let us fix a ball $B\subset\mathbb{R}^N$ and apply Lemma \ref{FK} to obtain
\[\left|B_1\right|^{\frac{sp}{N}}\lambda_{1,p}^{s}(B_1)
\left|A_k\right|^{-\frac{sp}{N}}\leq\lambda_{1,p}^s  (A_k)\leq\frac{[(u-k)^+]_{s,p}^p}{\int_{A_k}(u-k)^p\,\mathrm{d}x}.
\]
Hence, H\"{o}lder's inequality yields
\[\left(\int_{A_k}(u-k)\,\mathrm{d}x\right)^p   \leq|A_k|^{p-1}\int_{A_k}(u-k)^p\,   \mathrm{d}x\leq|A_k|^{p-1}\frac{|A_k|^{\frac{sp}{N}}
[(u-k)^+]_{s,p}^p}{\left|B_1\right|^{\frac{sp}{N}}
\lambda_{1,p}^s(B_1)}.
\]

Thus, it follows from \eqref{desigvmais} that
\begin{align*}
\left|B_1\right|^{\frac{sp}{N}}\lambda_{1,p}^s(B_1)
|A_k|^{-\frac{sp+N(p-1)}{N}}\left(\int_{A_k} (u-k)\,\mathrm{d}x\right)^p&\leq[(u-k)^+]_{s,p}^p\\
&\leq\left|f\right|_\infty\int_{A_k}(u-k)\,\mathrm{d}x,
\end{align*}
what yields
\[\left(\int_{A_k}(u-k)\,\mathrm{d}x\right) ^{p-1}\leq\frac{\left|f\right|_\infty|A_k|^{\frac{sp+N(p-1)}{N}}}{\left|B_1\right|^{\frac{sp}{N}}\lambda_{1,p}^s(B_1)}
\]
and so
\begin{equation}
\left(\int_{A_k}(u-k)\,\mathrm{d}x\right) ^{\frac{N(p-1)}{sp+N(p-1)}}
\leq\left(\frac{\left|f\right|_\infty}{\left|B_1\right|^{\frac{sp}{N}}\lambda_{1,p}^s(B_1)}\right)^{\frac
{N}{sp+N(p-1)}}|A_k|.\label{des1}%
\end{equation}

Define
\[g(k):=\int_{A_k}(u-k)\,\mathrm{d}x=\int_k ^{\infty}|A_k|\,\mathrm{d}t,
\]
the last equality being a consequence of Cavalieri's principle. Combining the definition of $g(k)$ with \eqref{des1}, we have
\[[g(k)]^{\frac{N(p-1)}{sp+N(p-1)}}\leq-\left(  \frac{\left|f\right|_\infty}{\left|B_1\right|^{\frac{sp}{N}}\lambda_{1,p}^s(B_1)}\right)  ^{\frac{N}{sp+N(p-1)}}g^{\prime}(k).
\]

Therefore,
\begin{equation}
1\leq-\left(\frac{\left|f\right|_\infty}{\left|B_1 \right|^{\frac{sp}{N}}\lambda_{1,p}^s(B_1)}\right)^{\frac{N}{sp+N(p-1)}}\left[g(k)\right]  ^{-\frac{N(p-1)}{sp+N(p-1)}}g'(k).
\label{paraintegral}%
\end{equation}

Integration of \eqref{paraintegral} from $0$ to $k$ produces
\begin{align*}
k &\leq\left(\frac{sp+N(p-1)}{sp}\right)\left(  \frac{\left|f\right|_\infty}{\left|B_1\right|^{\frac{sp}{N}}\lambda_{1,p}^s(B_1)}\right) ^{\frac{N}{sp+N(p-1)}}[g(0)^{\frac{sp}{sp+N(p-1)}}-g(k)^{\frac{sp}{sp+N(p-1)}}]\\
&\leq\left(\frac{sp+N(p-1)}{sp}\right)\left(  \frac{\left|f\right|_\infty}{\left|B_1\right|^{\frac{sp}{N}}\lambda_{1,p}^s(B_1)}\right)  ^{\frac{N}{sp+N(p-1)}}\left(\left|u\right|_1\right)  ^{\frac{sp}{sp+N(p-1)}},
\end{align*}
since $g(k)\geq0$ and $g(0)=\left|u\right|_1$. 

Let $c$ denote, just for a moment, the right-hand side of the latter inequality. We have proved that $k\leq c$ whenever $\left|A_k\right|>0$.  Since $c$ does not depend on $k$ this implies that
$\left|A_k \right|=0$ for every $k>c$, thus allowing us to conclude that
$u\in L^\infty(\Omega)$ and also that $\left|u\right|_\infty\leq c$.  So,
\[\left|u\right|_\infty\leq\left(\frac{sp+N(p-1)}{sp}\right)\left(\frac{\left|f\right|_\infty}{\left|B_1 \right|^{\frac{sp}{N}}\lambda_{1,p}^s(B_1)}\right)^{\frac{N}{sp+N(p-1)}}\left|u\right|_1 ^{\frac{sp}{sp+N(p-1)}}%
\]
or, what is the same,%
\[\left|u\right|_\infty^{1+\frac{N(p-1)}{sp}}\leq\left(\frac{sp+N(p-1)}{sp}\right)^{\frac{sp+N(p-1)}{sp}}\left(\frac{\left|f\right|_\infty}{\left|B_1\right|^{\frac{sp}{N}}\lambda_{1,p}^s(B_1)}\right) ^{\frac{N}{sp}}\left|u\right|_1,
\]
from what follows \eqref{estimf}.
\end{proof}

\begin{corollary}
The $(s,p)$-torsion function $\phi_p^s$ belongs to $L^{\infty}(\Omega)$ and, in addition,
\begin{equation}
\frac{1}{\left|\Omega\right|}\leq\frac{\left|\phi_p^s  \right|_\infty}{\left|\phi_p^s\right|_1}\leq
\frac{1}{\left|B_1\right|}\left(  \frac{sp+N(p-1)}{sp}\right)
^{\frac{sp+N(p-1)}{sp}}\left(\frac{\lambda_{1,p}^s (\Omega)}{\lambda_{1,p}^s(B_1 )}\right)  ^{\frac{N}{sp}}. \label{ratio}%
\end{equation}
\end{corollary}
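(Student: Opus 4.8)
The plan is to specialize Proposition \ref{fbounds} to the torsion function and then bootstrap the resulting inequality by means of the eigenvalue bounds already proved in \eqref{eigenbounds}. First I would apply Proposition \ref{fbounds} with $u=\phi_p^s$ and $f\equiv1$: this is legitimate because $\phi_p^s\in W_0^{s,p}(\Omega)$ is nonnegative and solves \eqref{fdiric} with the datum $f\equiv1\in L^\infty(\Omega)$. The proposition then gives at once that $\phi_p^s\in L^\infty(\Omega)$ together with
\[
\frac{\left|\phi_p^s\right|_\infty}{\left|\phi_p^s\right|_1}\leq\frac{1}{\left|B_1\right|}\left(\frac{sp+N(p-1)}{sp}\right)^{\frac{sp+N(p-1)}{sp}}\left(\frac{1}{\lambda_{1,p}^s(B_1)\left|\phi_p^s\right|_\infty^{p-1}}\right)^{\frac{N}{sp}}.
\]

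Next I would eliminate the factor $\left|\phi_p^s\right|_\infty^{-(p-1)}$ in favour of $\lambda_{1,p}^s(\Omega)$. The left-hand inequality in \eqref{eigenbounds} reads $\left|\phi_p^s\right|_\infty^{-(p-1)}\leq\lambda_{1,p}^s(\Omega)$, and since the exponent $\tfrac{N}{sp}$ is positive, monotonicity of $t\mapsto t^{N/sp}$ yields
\[
\left(\frac{1}{\lambda_{1,p}^s(B_1)\left|\phi_p^s\right|_\infty^{p-1}}\right)^{\frac{N}{sp}}\leq\left(\frac{\lambda_{1,p}^s(\Omega)}{\lambda_{1,p}^s(B_1)}\right)^{\frac{N}{sp}}.
\]
Combining this with the previous display gives the upper bound in \eqref{ratio}.

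The lower bound is elementary: since $\phi_p^s\geq0$ a.e.\ in $\Omega$ and vanishes outside $\Omega$, one has $\left|\phi_p^s\right|_1=\int_\Omega\phi_p^s\,\mathrm{d}x\leq\left|\phi_p^s\right|_\infty\left|\Omega\right|$, which rearranges to $\tfrac{1}{\left|\Omega\right|}\leq\tfrac{\left|\phi_p^s\right|_\infty}{\left|\phi_p^s\right|_1}$. I do not foresee any genuine obstacle here; the only thing to verify with (minimal) care is that the hypotheses of Proposition \ref{fbounds} apply verbatim to $\phi_p^s$, and this holds because $\phi_p^s$ is a nonnegative element of $W_0^{s,p}(\Omega)$ solving \eqref{fdiric} with the bounded datum $f\equiv1$. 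The whole argument is thus just a concatenation of \eqref{estimf}, the first inequality of \eqref{eigenbounds}, and Cavalieri/Hölder-type elementary estimates already in hand.
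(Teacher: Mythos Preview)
Your proposal is correct and follows exactly the paper's argument: specialize Proposition \ref{fbounds} with $u=\phi_p^s$, $f\equiv1$ to obtain $L^\infty$-membership and the preliminary upper bound, then replace $\left|\phi_p^s\right|_\infty^{-(p-1)}$ by $\lambda_{1,p}^s(\Omega)$ via the first inequality in \eqref{eigenbounds}; the lower bound is the elementary $\left|\phi_p^s\right|_1\le\left|\phi_p^s\right|_\infty\left|\Omega\right|$.
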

\begin{proof}
The first inequality is obvious. Proposition \ref{fbounds} with $u=\phi_p^s$ and $f\equiv1$ yields
\[
\frac{\left|\phi_p^s\right|_\infty}{\left|\phi_p^s  \right|_1}\leq\frac{1}{\left|B_1\right|}\left(\frac{sp+N(p-1)}{sp}\right)^{\frac{sp+N(p-1)}{sp}}\left(\frac{1}{\lambda_{1,p}^s(B_1)\left|\phi_p^s\right|_\infty^{p-1}}\right)^{\frac{N}{sp}}.
\]
Now, the second inequality in \eqref{ratio} follows from the first inequality in \eqref{eigenbounds}.
\end{proof}

\begin{theorem}
One has%
\[\lim_{p\rightarrow1^+}\frac{1}{\left|\phi_p^s\right|_\infty^{p-1}}=h_s(\Omega)=\lim_{p\rightarrow1^+}\frac{1}{\left|\phi_p^s\right|_1^{p-1}}.
\]
\end{theorem}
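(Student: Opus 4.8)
The plan is to squeeze the two quantities $\left|\phi_p^s\right|_\infty^{1-p}$ and $\left|\phi_p^s\right|_1^{1-p}$ between expressions that both converge to $h_s(\Omega)$ as $p\to 1^+$, using the chain of estimates already established in the excerpt together with \eqref{skaw}. First I would rewrite \eqref{eigenbounds} as
\[
\frac{1}{\left|\phi_p^s\right|_\infty^{p-1}}\leq\lambda_{1,p}^s(\Omega)\leq\frac{\left|\Omega\right|^{p-1}}{\left|\phi_p^s\right|_1^{p-1}},
\]
so that $\left|\Omega\right|^{1-p}\lambda_{1,p}^s(\Omega)\leq\left|\phi_p^s\right|_1^{1-p}$ and $\left|\phi_p^s\right|_\infty^{1-p}\leq\lambda_{1,p}^s(\Omega)$. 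Since $\left|\Omega\right|^{1-p}\to 1$ as $p\to1^+$, \eqref{skaw} immediately gives $\liminf_{p\to1^+}\left|\phi_p^s\right|_1^{1-p}\geq h_s(\Omega)$ and $\limsup_{p\to1^+}\left|\phi_p^s\right|_\infty^{1-p}\leq h_s(\Omega)$.

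The remaining two inequalities come from \eqref{ratio}, which controls the ratio $\left|\phi_p^s\right|_\infty/\left|\phi_p^s\right|_1$. Raising \eqref{ratio} to the power $1-p<0$ (which reverses the inequalities) and combining with the bounds just obtained, I would deduce
\[
\left|\phi_p^s\right|_1^{1-p}\leq\left|\phi_p^s\right|_\infty^{1-p}\left[\frac{1}{\left|B_1\right|}\left(\frac{sp+N(p-1)}{sp}\right)^{\frac{sp+N(p-1)}{sp}}\left(\frac{\lambda_{1,p}^s(\Omega)}{\lambda_{1,p}^s(B_1)}\right)^{\frac{N}{sp}}\right]^{1-p},
\]
and symmetrically $\left|\phi_p^s\right|_\infty^{1-p}$ is bounded above by $\left|\phi_p^s\right|_1^{1-p}$ times the reciprocal factor. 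The key point is that as $p\to1^+$ the bracketed correction factor tends to a finite nonzero limit: indeed $\left(\frac{sp+N(p-1)}{sp}\right)^{\frac{sp+N(p-1)}{sp}}\to 1$, $\frac{N}{sp}\to\frac{N}{s}$, and by Faber--Krahn \eqref{FaKr} together with \eqref{skaw} the ratio $\lambda_{1,p}^s(\Omega)/\lambda_{1,p}^s(B_1)$ stays positive and bounded; hence the whole bracket converges to $\frac{1}{\left|B_1\right|}\left(\frac{h_s(\Omega)}{h_s(B_1)}\right)^{N/s}$, a fixed positive constant, and raising it to the power $1-p\to 0$ yields limit $1$.

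Putting the four one-sided estimates together, I obtain
\[
\limsup_{p\to1^+}\left|\phi_p^s\right|_1^{1-p}\leq\limsup_{p\to1^+}\left|\phi_p^s\right|_\infty^{1-p}\leq h_s(\Omega)\leq\liminf_{p\to1^+}\left|\phi_p^s\right|_1^{1-p}\leq\liminf_{p\to1^+}\left|\phi_p^s\right|_\infty^{1-p},
\]
where the middle inequality uses $\left|\phi_p^s\right|_\infty\geq\left|\phi_p^s\right|_1/\left|\Omega\right|$ (first inequality of \eqref{ratio}) and $\left|\Omega\right|^{1-p}\to1$; this forces all the limits to exist and equal $h_s(\Omega)$. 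The main obstacle — really the only delicate point — is verifying that the correction factor in \eqref{ratio} neither blows up nor collapses as $p\to1^+$; this rests on knowing $h_s(B_1)=\lim_{p\to1^+}\lambda_{1,p}^s(B_1)$ is a finite positive number and on the uniform two-sided control of $\lambda_{1,p}^s(\Omega)$ provided by \eqref{FaKr} and \eqref{skaw}, after which the argument is just bookkeeping with the monotonicity of $t\mapsto t^{1-p}$.
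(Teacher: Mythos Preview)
Your argument is correct and follows the same route as the paper: both proofs combine \eqref{eigenbounds} with \eqref{skaw} to get the two ``easy'' one-sided bounds, and then use \eqref{ratio} to show that $\bigl(\left|\phi_p^s\right|_\infty/\left|\phi_p^s\right|_1\bigr)^{p-1}\to 1$, which closes the squeeze. The paper simply records this last limit and then sandwiches everything between $\lim_{p\to1^+}\lambda_{1,p}^s(\Omega)$, whereas you unfold the same idea into a chain of $\liminf/\limsup$ inequalities; the content is identical.

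Two small cosmetic points: in your displayed bound the exponent on the bracketed factor should be $p-1$ rather than $1-p$ (raising $\left|\phi_p^s\right|_\infty/\left|\phi_p^s\right|_1\leq C_p$ to the power $1-p$ gives $\left|\phi_p^s\right|_1^{1-p}\leq\left|\phi_p^s\right|_\infty^{1-p}\,C_p^{\,p-1}$), and in your final chain none of the inequalities actually requires the \emph{first} inequality of \eqref{ratio} --- inequalities 1 and 4 both come from the \emph{second} inequality of \eqref{ratio}, while inequalities 2 and 3 are exactly the bounds you derived from \eqref{eigenbounds}. Neither slip affects the validity of the proof, since you only need the correction factor raised to a vanishing power to tend to $1$.
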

\begin{proof}
Taking \eqref{skaw} into account, we have
\[\lim_{p\rightarrow1^+}\frac{\lambda_{1,p}^s(\Omega)}{\lambda_{1,p}^s(B_1 )}=\frac{h_s(\Omega)}{h_s(B_1)}\in(0,\infty).
\]
Hence, it follows from \eqref{ratio} that
\[\lim_{p\rightarrow1^+}\left(\frac{\left|\phi_p^s\right|_\infty}{\left|\phi_p^s\right|_1}\right) ^{p-1}=1.
\]

Thus, by making $p$ go to $1$ in \eqref{eigenbounds} we have
\begin{align*}
\lim_{p\rightarrow1^+}\lambda_{1,p}^{s}(\Omega)& \leq\lim_{p\rightarrow1^+}\frac{\left|\Omega\right|^{p-1}}{\left|\phi_p^s\right|_1^{p-1}}\\
& =\lim_{p\rightarrow1^+}\frac{1}{\left|\phi_p^s\right|_1^{p-1}}\\
& =\lim_{p\rightarrow1^+}\left(\frac{\left|\phi_p^s\right|_\infty}{\left|\phi_p^s\right|_1}\right)^{p-1}\lim_{p\rightarrow1^+}\frac{1}{\left|\phi_p^s\right|_\infty^{p-1}}=\lim_{p\rightarrow1^+}\frac{1}{\left|\phi_p^s\right|_\infty^{p-1}}\leq\lim_{p\rightarrow1^+}\lambda_{1,p}^{s}(\Omega).
\end{align*}
Since $\lim_{p\rightarrow1^+}\lambda_{1,p}(\Omega)=h_s (\Omega)$, we are done.
\end{proof}\vspace*{.5cm}

In \cite{BLP}, the authors also proved that
\[h_s(\Omega)=\inf\left\{\left[v\right]_{s,1}^p   \,:\,v\in W_0^{s,1}(\Omega), \left|v\right|_1 =1\right\}.
\]
Since $W_0 ^{s,1}(\Omega)$ is not reflexive, they were able to prove that the minimum $h_s(\Omega)$ is attained on the larger Sobolev space
\[\mathcal{W}_0 ^{s,1}(\Omega):=\left\{v\in L^1  (\Omega)\,:\,\left[v\right]_{s,1}^p   <\infty,\ u=0\;\textrm{a.e. in }
\ \mathbb{R}^N\setminus\Omega\right\}.
\]

For completeness, we state the following result on the behavior of the $L^1$-normalized family $\left\{\frac{\phi_p^s}{\left|\phi_p^s\right|_1}\right\}$ as $p\rightarrow1$.  It corresponds to
\cite[Theorem 7.2]{BLP}, which was proved for the family $\left\{\frac{e_p^s}{\left|e_p^s\right|_1}\right\}  $.  Its proof
follows the same script and will be omitted.

\begin{theorem}
Let $u_p:=\frac{\phi_p^s}{\left|\phi_p^s\right|_1}$. 
There exists a sequence $\left(p_n\right)$ such that $p_n\rightarrow1^+$ and $u_{p_n}\rightarrow u$ in $L^q(\Omega)$, for every $q<\infty$.  The limit function $u$ is a solution of the minimization problem
\[h_s(\Omega)=\min_{v\in\mathcal{W}_0 ^{s,1}(\Omega)}\left\{\left[v\right]_{s,1}\,:\,u\geq0,\ \left|u\right|_1 =1\right\}  .
\]
Moreover, $u\in L^\infty(\Omega)$ and
\begin{equation}
\frac{1}{\left|\Omega\right|}\leq\left|u\right|_\infty\leq\frac{1}{\left|B_1\right|}\left(\frac{h_s(\Omega)}{h_s(B_1)}\right)^{\frac{N}{s}}.\label{uinf}%
\end{equation}
\end{theorem}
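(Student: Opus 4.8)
The plan is to fill in the omitted argument by following the $\Gamma$-convergence scheme of \cite[Theorem 7.2]{BLP}, where the analogous statement is established for $e_p^s/|e_p^s|_1$; the only ingredients peculiar to the torsion function are the identity $[u_p]_{s,p}^p=[\phi_p^s]_{s,p}^p/|\phi_p^s|_1^p=|\phi_p^s|_1^{1-p}$ (using $[\phi_p^s]_{s,p}^p=|\phi_p^s|_1$), which together with the theorem just proved gives $[u_p]_{s,p}^p\to h_s(\Omega)$ as $p\to 1^+$, and the uniform control of $|u_p|_\infty$ afforded by \eqref{ratio}. First I would record two uniform bounds. From \eqref{ratio}, $|u_p|_\infty=|\phi_p^s|_\infty/|\phi_p^s|_1$ is, for $p$ near $1$, bounded by a quantity converging to $|B_1|^{-1}(h_s(\Omega)/h_s(B_1))^{N/s}$, so $M:=\limsup_{p\to 1^+}|u_p|_\infty<\infty$. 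Fixing $\sigma\in(0,s)$, I would then show that $[u_p]_{\sigma,1}$ stays bounded as $p\to 1^+$: splitting the Gagliardo integral at $|x-y|=1$, the part over $\{|x-y|\ge 1\}$ is at most $2\,|u_p|_1\int_{|z|\ge 1}|z|^{-N-\sigma}\,\mathrm{d}z$, a fixed constant since $|u_p|_1=1$; on $\{|x-y|<1\}$ I would write the integrand as $\bigl(|u_p(x)-u_p(y)|^p\,|x-y|^{-N-sp}\bigr)^{1/p}\,|x-y|^{(N+sp)/p-(N+\sigma)}$ and apply H\"older with exponents $p$ and $p/(p-1)$, so that the first factor yields $[u_p]_{s,p}$, which is bounded (it is the $1/p$-th power of a convergent sequence), while the exponent of $|x-y|$ in the second factor equals $-N+(s-\sigma)\tfrac{p}{p-1}\to+\infty$ as $p\to 1^+$, making that factor $\le 1$ on the bounded set where $|u_p(x)-u_p(y)|$ does not vanish identically. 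Hence this part is bounded as well.

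Next I would extract the limit. Each $u_p$ is supported in the fixed compact set $\overline{\Omega}$, with $|u_p|_1=1$ and $[u_p]_{\sigma,1}$ uniformly bounded, so the fractional Rellich--Kondrachov theorem (\cite[Theorem 7.1]{Guide}) yields a sequence $p_n\to 1^+$ with $u_{p_n}\to u$ in $L^1(\Omega)$ and, along a further subsequence, a.e.\ in $\Omega$. Then $u\ge 0$, $u=0$ a.e.\ in $\mathbb{R}^N\setminus\Omega$, $|u|_1=1$, and $|u|_\infty\le M\le|B_1|^{-1}(h_s(\Omega)/h_s(B_1))^{N/s}$; combining the $L^1$-convergence with the uniform $L^\infty$-bound through $|u_{p_n}-u|_q^q\le(|u_{p_n}|_\infty+|u|_\infty)^{q-1}|u_{p_n}-u|_1$ upgrades the convergence to $L^q(\Omega)$ for every $q<\infty$. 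Since $1=|u|_1\le|\Omega|\,|u|_\infty$, the left inequality in \eqref{uinf} follows, and the right one has just been obtained.

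To see that $u$ solves the minimization problem it suffices to prove $[u]_{s,1}\le h_s(\Omega)$: granting this, $u\in\mathcal{W}_0^{s,1}(\Omega)$ and, being nonnegative with $|u|_1=1$, it is admissible in the problem whose value is $h_s(\Omega)$ (the characterization of $h_s(\Omega)$ as a minimum over $\mathcal{W}_0^{s,1}(\Omega)$ recalled just before the statement), so $[u]_{s,1}\ge h_s(\Omega)$ too, forcing $[u]_{s,1}=h_s(\Omega)$, i.e.\ $u$ is a minimizer. For the inequality I would truncate the kernel to an annulus: for $0<\delta<R$ and $q>1$, $[v]_{s,q}^q$ dominates $\int\!\!\int_{\delta<|x-y|<R}|v(x)-v(y)|^q\,|x-y|^{-N-sq}\,\mathrm{d}x\,\mathrm{d}y$; taking $v=u_{p_n}$, $q=p_n$, and observing that on $\{\delta<|x-y|<R\}$ the integrand converges a.e.\ to $|u(x)-u(y)|\,|x-y|^{-N-s}$ and, because $|u_{p_n}|_\infty\le M$ and $|x-y|\ge\delta$, is dominated by a fixed integrable function on a bounded set, dominated convergence gives $\liminf_n[u_{p_n}]_{s,p_n}^{p_n}\ge\int\!\!\int_{\delta<|x-y|<R}|u(x)-u(y)|\,|x-y|^{-N-s}\,\mathrm{d}x\,\mathrm{d}y$. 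Letting $\delta\to 0^+$ and $R\to\infty$ (monotone convergence) and using $[u_{p_n}]_{s,p_n}^{p_n}=|\phi_{p_n}^s|_1^{1-p_n}\to h_s(\Omega)$, I obtain $[u]_{s,1}\le h_s(\Omega)$.

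The delicate point is this last step, the $\Gamma$-$\liminf$ inequality: since both the order $N+sp$ of the kernel and the exponent $p$ in the numerator move with $p$, there is no pointwise comparison between the integrands of $[u_p]_{s,1}$ and $[u_p]_{s,p}^p$ valid on all of $\mathbb{R}^N\times\mathbb{R}^N$, and it is precisely the annular truncation — together with the uniform $L^\infty$-bound from \eqref{ratio}, which supplies the dominating function — that resolves this. The same $p$-dependence is what makes the seminorm estimate in the first step non-automatic, although there the H\"older computation is routine once one notices that the troublesome exponent becomes harmless as $p\to 1^+$.
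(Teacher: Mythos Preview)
Your proposal is correct and follows precisely the route the paper indicates: the proof is omitted there with the remark that it ``follows the same script'' as \cite[Theorem~7.2]{BLP}, and you have carried out exactly that script, supplying the two torsion-specific inputs (the identity $[u_p]_{s,p}^p=|\phi_p^s|_1^{1-p}\to h_s(\Omega)$ and the uniform $L^\infty$ bound from \eqref{ratio}) in the right places. The paper's post-theorem remarks derive the bounds in \eqref{uinf} via $|u|_q\le|\Omega|^{1/q}\lim|u_{p_n}|_\infty$ and letting $q\to\infty$, whereas you obtain them directly from a.e.\ convergence and the uniform $L^\infty$ control; both are equivalent and equally short.
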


The upper bound that appears in the statement of Theorem 7.2 of \cite{BLP} is
\[\left[\frac{\left|B\right|^{\frac{N-s}{N}}}{P_s(B)}\right]^{\frac{N}{s}}h_s(\Omega)^{\frac{N}{s}}.
\]
However, it is very simple to check, by applying \eqref{FK2}, that it is equal to the upper bound in \eqref{uinf}.

We remark that, once obtained the convergence in $L^{q}(\Omega)$ stated above, the upper bound in \eqref{uinf} follows from \eqref{ratio}. Indeed,
since
\[
\left|u\right|_q=\lim_{n\rightarrow\infty}\left|u_{p_n}\right|_q\leq\left|\Omega\right|^{\frac{1}{q}}\lim_{n\rightarrow\infty}\left|u_{p_n}\right|_\infty
\]
\eqref{ratio} implies that
\[\left|\Omega\right|^{-\frac{1}{q}}\left|u\right|_q
\leq\lim_{n\rightarrow\infty}\left|u_{p_n}\right|_\infty\leq\frac{1}{\left|B_1\right|}\left(\frac{h_s (\Omega)}{h_s(B_1)}\right)^{\frac{N}{s}}.
\]
Hence, the upper bound in \eqref{uinf} follows, since $\left|u\right|_\infty=\lim_{q\rightarrow\infty}\left|\Omega\right|^{-\frac{1}{q}}\left|u\right|_q$. 

The lower bound in \eqref{uinf}, which does not appear in the statement of Theorem 7.2 of \cite{BLP}, follows by taking $q=1$, since
\[1=\lim_{n\rightarrow\infty}\left|u_{p_n}\right|_1 =\left|u\right|_1\leq\left|u\right|_\infty\left|\Omega\right|.
\]

It is interesting to note that, as it happens with the standard $p$-torsion functions, $\left|u\right|_\infty=\left|\Omega\right|^{-1}$ when $\Omega$ is a ball. In fact, in this case \eqref{FK2} yields
\[\frac{1}{\left|B_1\right|}\left(\frac{h_s(\Omega)}{h_s(B_1)}\right)^{\frac{N}{s}}=\frac{1}{\left|\Omega\right|}.
\]

\end{document}